\newcommand{\Real}{\mathbb{R}}
\newcommand{\set}[1]{\left\{#1\right\}}                     
\newcommand{\norm}[1]{\left\|#1\right\|}
\newcommand{\bra}[1]{\left(#1\right)}
\newcommand{\sidx}[1]{\left\llbracket     #1 \right\rrbracket}
\DeclareMathOperator{\cl}{cl}
\DeclareMathOperator{\conv}{conv}
\DeclareMathOperator{\relint}{relint}
\DeclareMathOperator{\Int}{int}
\DeclareMathOperator{\dom}{dom}
\DeclareMathOperator{\proj}{proj}
\definecolor{darkred}{rgb}{0.7,0,0}
\definecolor{lightred}{rgb}{0.5,0,0}
\definecolor{lightgreen}{rgb}{0,0.8,0}
\newcommand{\bx}[0]{\bm{x}}
\newcommand{\by}[0]{\bm{y}}
\newcommand{\bz}[0]{\bm{z}}
\begin{document}
\title{Shapes and recession cones in mixed-integer convex representability}
\author{Ilias Zadik \and Miles Lubin \and Juan Pablo Vielma}
\date{\today}

\institute{
Ilias Zadik \at Center for Data Science \\ New York University\\
\email{zadik@nyu.edu}
\and
Miles Lubin \at Google Research \\
\email{mlubin@google.com}
\and
Juan Pablo Vielma \at Google Research  \\
\email{jvielma@google.com}
}

\maketitle

\begin{abstract}

Mixed-integer convex representable (MICP-R) sets are those sets that can be represented exactly through a mixed-integer convex programming formulation. Following up on recent work by Lubin et al. (2017, 2020) we investigate structural geometric properties of MICP-R sets, which strongly differentiate them from the class of mixed-integer linear representable sets (MILP-R).  First, we provide an example of an MICP-R set which is the countably infinite union of convex sets with countably infinitely many different recession cones. This is in sharp contrast with MILP-R sets which are at most infinite unions of polyhedra that share the same recession cone. Second, we provide an example of an MICP-R set which is the countably infinite union of polytopes all of which have different shapes (no pair is combinatorially equivalent, which implies they are not affine transformations of each other). Again, this is in sharp contrast with MILP-R sets which are at most infinite unions of polyhedra that are all translations of a finite subset of themselves. Interestingly, we show that a countably infinite union of convex sets sharing the same volume can be MICP-R only if the sets are all translations of a finite subset of themselves (i.e. the natural conceptual analogue to the MILP-R case).
\end{abstract}

\keywords{Mixed-Integer Convex Optimization, Mixed-Integer Programming Formulations}
\subclass{90C11, 90C25}

\section{Introduction}

We use the following definition of mixed-integer convex representability and mixed-integer convex programming (MICP) formulations.
\begin{definition}\label{MICPFORMULATIONDEF}
   Let $n,p,d \in \mathbb{N}$, $S \subseteq \mathbb{R}^n$ and $M\subseteq \mathbb{R}^{n+p+d}$ be a closed convex set. We say that  $M$ induces an \textbf{MICP formulation} of $S$ if
   \begin{equation}\label{MICPdefexists}
   \bx \in S \quad \Leftrightarrow \quad \exists \by \in \mathbb{R}^{p}, \bz \in \mathbb{Z}^{d}  \quad \text{s.t.} \quad \bra{\bx,\by,\bz}\in  M.
   \end{equation}

   We call a set $S \subseteq \mathbb{R}^n$ \textbf{MICP representable} (MICP-R) if there exists a closed convex set $M\subseteq \mathbb{R}^{n+p+d}$ that induces an MICP formulation of $S$. 
   
   We further call $S$ \textbf{binary MICP-R} if there exists such an $M$ that additionally satisfies  \[\proj_{\bz}\bra{M\cap \bra{\mathbb{R}^{n+p} \times \mathbb{Z}^d}}\subseteq\set{0,1}^d,\]where $\proj_{\bz}$ is the projection onto the last $d$ variables (i.e. informally if $S$ has an MICP formulation with only binary integer variables). 
    
   If $S$ is MICP-R, but not binary MICP-R we call it \textbf{general-integer MICP representable} (general-integer MICP-R) to emphasize the need for unbounded integer variables to model it (note that any MICP formulation with only bounded integer variables can be converted to one with only binary variables through a standard affine transformation  \cite[Footnote 4]{MORpaper})
\end{definition} 

It is an easy corollary of \autoref{MICPFORMULATIONDEF} (see e.g. \cite[Theorem 4.1]{MORpaper}) that if $M\subseteq \mathbb{R}^{n+p+d}$ induces an MICP formulation of $S\subseteq \mathbb{R}^{n}$, then
   \begin{equation}\label{infiniteunion}
   S=\bigcup_{\bm{z} \in I\cap \mathbb{Z}^d} \proj_{\bx}\bra{B_{\bm{z}}},\end{equation}
   where $I=\proj_{\bz}\bra{M}$ and  $B_{\bm{z}}=M \cap (\mathbb{R}^{n+p} \times \{\bm{z}\})$ for any $\bm{z} \in I$. Hence, a binary MICP-R set is a finite union of  convex sets, and a general-integer MICP-R set is a countably infinite union  of convex sets (in both cases the convex sets are projections of closed convex sets, but in principle may not be closed themselves). The work in \cite{Lubin2016b,MORpaper} focuses on understanding the specific structure imposed on these unions of convex sets by an MICP formulation. 
   
With regards to the finite unions for the binary MICP-R case,  MICP formulations proposed prior to \cite{Lubin2016b,MORpaper} required the convex sets in the union \eqref{infiniteunion} to have the same recession cones (e.g. \cite{Ceria1999,DBLP:conf/ipco/PiaP16,Jeroslow1984,jeroslow1987representability,stubbs1999branch}). However, \cite{Lubin2016b,MORpaper} show that such a restriction on the recession cones is not necessary for a finite union of convex sets to be binary MICP-R (e.g.
\cite[Proposition 4.2]{MORpaper}). 
For instance, as illustrated in \cite[Example 4.1]{MORpaper}, the set $S=\set{0}\cup (-\infty,-1]$ is the union of two convex sets  ($\set{0}$ and $(-\infty,-1]$) that have different recession cones ($\set{0}$ and $(-\infty,0]$ respectively) and a binary MICP formulation of $S$ is induced by 
\[M= \set{\bra{x,y,z}\in \Real \times \Real^2_+\,:\, x^2 \le yz,\quad x \leq -z,\quad 0 \leq z \leq 1}.\]
The first question we consider in this paper is whether a conceptual analogue of such a recession cone property extends to the class of general-integer MICP-R sets; i.e. \textit{are there general-integer MICP-R sets that are countable infinite unions of convex sets with countably infinitely many different recession cones?} We answer this question in the affirmative. 

On the other hand, the fact that the union of these convex sets defines a general-integer MICP-R set implies a certain structure on them, even when all the convex sets are bounded (and therefore have trivial recession cones). For instance, \cite{MORpaper} shows that the countably infinite union of line segments with increasing slopes (all of which are affinely equivalent)  given by the set $\bigcup_{i\in \mathbb{Z}}\conv\bra{\set{\bra{i,i^2},\bra{i+1,(i+1)^2}}}$ is not MICP-R. The second question we consider in this work is on the ``shapes" of the convex sets defined in the union \eqref{infiniteunion}, where equality of shapes is defined by some precise notion such as affine equivalency. Here we ask, \textit{are there general-integer MICP-R sets that are countably infinite unions of convex sets with countably infinitely many different shapes?} We also answer this question in the affirmative for a shape-equivalency  notion that is weaker than affine equivalency (it is implied by affine equivalency). In addition, we show that if certain volume conditions hold, then the union must have finite shapes for a shape-equivalency notion that is stronger than affine equivalency (it implies affine equivalency).

For both questions we will also consider a restriction of general-integer MICP-R sets that was introduced in \cite{MORpaper} to avoid certain pathological behavior of such sets (see \cite[Section 1.3]{MORpaper} for an example). This restriction is defined as follows.

   \begin{definition}\label{rationaldefc}
    We say a set $I \subseteq \mathbb{R}^d$ is \textbf{rationally unbounded} if for any rational affine image  $I'\subseteq \Real^{d'}$ of $I$, either $I'$ is a bounded set or it holds that $ \mathbb{Z}^{d'} \cap (I'_\infty \setminus \set{\bm{0}}) \not = \emptyset$, where $C_\infty =\set{\bm{r}\in \Real^d\,:\, \bm{x}+\lambda \bm{r} \in C \quad \forall \bm{x} \in C,\quad \lambda\geq 0}$ for any convex set $C$.
  
  We say that a set $S$ is \textbf{rational MICP representable} (rational MICP-R) if it has an MICP representation induced by the set $M$ such that $\proj_{\bz}(M)$ is rationally unbounded.  
\end{definition}
  
Under mild assumptions, rational MICP-R sets can always be written as the union of a finite family of compact convex sets and a finite family of closed periodic sets \cite[Theorem 5.1]{MORpaper}. However, a single periodic set, in principle, could be itself a countable union of convex sets with different recession cones and/or shapes (e.g. for the definition of periodic set in \cite[Definition  1.7]{MORpaper} we have that $S\times \mathbb{Z}$ is periodic for any set $S$). Of course, it is not obvious to what extent such periodic set could arise within a rational MICP-R set. Hence, while rational MICP-R sets add an additional non-trivial structure to the studied sets, the posed questions are not automatically answered even in this restricted case.

Similarly to \cite{Lubin2016b,MORpaper} it will be convenient to define the following sets associated to an MICP formulation. 
  \begin{definition}\label{def:index}
      Let $M\subseteq \mathbb{R}^{n+p+d}$ be a closed, convex set that induces an MICP formulation of $S \subseteq \mathbb{R}^n$. We refer to $I=\proj_{\bz}\bra{M}$ as the \textbf{index set} of the MICP formulation and to {the collection of sets $\set{A_{\bm{z}}}_{\bz\in I}$ with $A_{\bm{z}}=\proj_{\bx}\bra{M \cap (\mathbb{R}^{n+p} \times \{\bm{z}\})}$ for each $\bz \in I$, as its \textbf{$\bm{z}$-projected sets}. }
  \end{definition}Note that under this definition the convex sets defining the union \eqref{infiniteunion} correspond exactly to the family of the $\bm{z}$-projected sets.

\section{Infinitely many recession cones}
Towards providing some intuition for the desired example, we start with describing certain potentially fundamental differences between the binary and general-integer MICP-R classes. Let $M\subseteq \mathbb{R}^{n+p+d}$ be a closed convex set that induces a binary MICP formulation of $S\subseteq \Real^n$ and let $I\subseteq \Real^d$ be its index set. Because $M$ induces a binary MICP formulation, notice that $I\cap\mathbb{Z}^d \subseteq \set{0,1}^d$ and hence $M\cap \bra{\Real^{n+p}\times \conv\bra{I\cap \mathbb{Z}^d}}$ also induces a binary MICP formulation of $S$.  
Then, an arguably special property of binary MICP formulations is that we may assume that their index sets $I$ are such that all elements of $I\cap\mathbb{Z}^d$ lie on the boundary of $I$. 

Notice that this property no longer holds for general-integer MICP formulations;  e.g., consider the case $p=0$, $n=d$, $M=\set{\bra{\bx,\bz}\in \Real_+^{2d}\,:\, \bx=\bz}$, $I=\Real_+^{d}$ and $S=\mathbb{Z}_+^d$ (where $\mathbb{Z}_+$ is the set of non-negative integers). Hence, we may easily construct an example where we have an infinite number of elements of $I\cap\mathbb{Z}^d$ that are in the relative interior of $I$. One can naturally conjecture that this could be a crucial property allowing for the corresponding infinitely many $\bm z$-projected sets to define countably infinitely many different recession cones. On the contrary, the following proposition shows that such \emph{interior} integer elements of $I$ define necessarily the same recession cone (up to taking closures) and therefore cannot be of help to construct a general-integer MICP-R set with an infinite number of recession cones.

\begin{restatable}{proposition}{interiorrecessioncones}\label{interiorrecessionconesref}
 Let $M\subseteq \Real^{n+p+d}$ be a closed convex set inducing an MICP formulation of a set  $S\subseteq \Real^n$, $I$ be its index set, and $\set{A_{\bz}}_{\bz\in I}$ be its $\bz$-projected sets. Then
 \[\bra{\cl\bra{A_{\bz}}}_{{\infty}}=\bra{\cl\bra{A_{\bz'}}}_{{\infty}}\quad \forall \bz,\bz'\in \relint\bra{I}.\]
\end{restatable}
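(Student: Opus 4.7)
The plan is to exploit the convexity of $M$ together with the openness of $\relint(I)$ inside $I$. Fix $\bz_1,\bz_2\in \relint(I)$. Since $I=\proj_{\bz}(M)$ is convex as the projection of a convex set and $\bz_1\in \relint(I)$, I can extend the segment from $\bz_2$ through $\bz_1$ slightly beyond $\bz_1$ while still remaining in $I$: there exists $\epsilon>0$ with $\bz_3:=\bz_1+\epsilon(\bz_1-\bz_2)\in I$, and hence $\bz_1=\alpha\bz_2+(1-\alpha)\bz_3$ with $\alpha=\epsilon/(1+\epsilon)\in(0,1)$. Crucially, I only need $\bz_3\in I$, not $\bz_3\in \relint(I)$.

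Next I would transfer this identity to the $\bz$-projected sets via convexity of $M$. For any $\bx_2\in A_{\bz_2}$ and $\bx_3\in A_{\bz_3}$ pick lifts $(\bx_2,\by_2,\bz_2),(\bx_3,\by_3,\bz_3)\in M$; their $(\alpha,1-\alpha)$ convex combination lies in $M$ and projects onto $(\alpha\bx_2+(1-\alpha)\bx_3,\bz_1)$. Hence $\alpha A_{\bz_2}+(1-\alpha)A_{\bz_3}\subseteq A_{\bz_1}$, and the standard inclusion $\cl(X)+\cl(Y)\subseteq \cl(X+Y)$ then yields
\[
\alpha\,\cl(A_{\bz_2})+(1-\alpha)\,\cl(A_{\bz_3})\subseteq \cl(A_{\bz_1}).
\]

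To finish, take $\bd\in (\cl(A_{\bz_2}))_{\infty}$ and fix base points $\bx_2\in \cl(A_{\bz_2})$ and $\bx_3\in \cl(A_{\bz_3})$ (both nonempty since $\bz_2,\bz_3\in I$). Because $\bd$ is a recession direction, $\bx_2+\lambda\bd\in \cl(A_{\bz_2})$ for every $\lambda\geq 0$, so the displayed inclusion produces the ray $(\alpha\bx_2+(1-\alpha)\bx_3)+(\alpha\lambda)\bd\in \cl(A_{\bz_1})$ for every $\lambda\geq 0$. By the usual characterization of the recession cone of a nonempty closed convex set (a direction is recession iff the ray from some base point stays in the set), $\alpha\bd\in (\cl(A_{\bz_1}))_{\infty}$, and since recession cones are cones, $\bd\in (\cl(A_{\bz_1}))_{\infty}$. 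Swapping the roles of $\bz_1$ and $\bz_2$, which is legitimate because both lie in $\relint(I)$, yields the opposite inclusion and completes the proof. The only subtle point, and the only place where the hypothesis $\bz,\bz'\in \relint(I)$ genuinely enters, is the initial extension that places $\bz_3$ inside $I$; everything else is routine convex-analytic bookkeeping.
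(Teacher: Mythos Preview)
Your argument is correct and takes a genuinely different route from the paper. The paper works dually through support functions: it sets $f_{\bc}(\bz)=\inf\{\bc^T\bx:\bx\in A_{\bz}\}$, invokes a lemma from prior work to get convexity of $f_{\bc}$ on $I$, argues that a convex function finite at an interior point is finite everywhere, and then concludes via the polarity between $\cl(\dom\sigma_C)$ and $C_\infty$ (citing Hiriart--Urruty and Lemar\'echal). You instead argue primally: the Minkowski-sum inclusion $\alpha A_{\bz_2}+(1-\alpha)A_{\bz_3}\subseteq A_{\bz_1}$ lets you transport any recession ray of $\cl(A_{\bz_2})$ directly into $\cl(A_{\bz_1})$, and the single-base-point characterization of recession directions for closed convex sets finishes the job. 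Your approach is more elementary and self-contained---it avoids support-function machinery and external citations entirely---while the paper's approach has the virtue of isolating the phenomenon as a statement about where $\sigma_{A_{\bz}}$ is finite, which connects to other structural results in that line of work. One cosmetic remark: once you have the ray $\bp+(\alpha\lambda)\bd\in\cl(A_{\bz_1})$ for all $\lambda\geq 0$, the parameter $\alpha\lambda$ already sweeps all of $\Real_+$, so you get $\bd\in(\cl(A_{\bz_1}))_\infty$ directly without the detour through $\alpha\bd$ and the cone property.
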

\begin{proof}
 For any $A\subseteq \Real^n$ and let $\sigma_A:\Real^n\to\Real\cup \set{+\infty}$ be the support function of $A$ given by $\sigma_A\bra{\bm{c}}=\operatorname{sup}\{ {\bm{c}}^T\bm{x} : \bm{x} \in A \}$, and for any $\bm{c}\in \Real^n$ let $f_{{\bm{c}}}:I\to \Real\cup \set{-\infty}$ be given by $f_{{\bm{c}}}(\bm{z})=-\sigma_{A_{\bm{z}}}\bra{-\bm{c}} =  \operatorname{inf}\{ {\bm{c}}^T\bm{x} : \bm{x} \in A_{\bm{z}} \}$.  By  \cite[Lemma 6.11]{MORpaper}, $f_{{\bm{c}}}$ is convex for any $\bm{c}\in \Real^n$ even though it may fail to be finite valued.\footnote{For example, consider $M=\set{(x,y,z)\in \Real^3 \,:\,x^2\leq y\cdot z,\quad x\leq -z\,\quad 0\leq z\leq 1}$, which has $I = \proj_z(M)=[0,1]$ and $f_{1}(0)=0$ and $f_{1}(z)=-\infty$ for all $z\in (0,1]$.}
 
 We claim that, for any fixed $\bm{c}\in \Real^n$, if there exists $\bar{\bz}\in \relint\bra{I}$ such that $f_{{\bm{c}}}(\bar{\bz})> -\infty$, then  $f_{{\bm{c}}}({\bz})> -\infty$ for all $\bz \in I$. Indeed, if such $\bar{\bz}$ exists, for any $\bz\in I$ there exists $\varepsilon>0$ such that $\bar{\bz}^+:= \bar{\bz} + \varepsilon (\bz-\bar{\bz})\in I$ and $\bar{\bz}^-:= \bar{\bz} - \varepsilon (\bz-\bar{\bz})\in I$. By convexity of $f_{{\bm{c}}}$ we have $-\infty <f_{{\bm{c}}}(\bar{\bz})\leq\frac{1}{2}\bra{f\bra{\bar{\bz}^+}+f_{{\bm{c}}}\bra{\bar{\bz}^-}}$ and hence $f_{{\bm{c}}}\bra{\bar{\bz}^+}, f_{{\bm{c}}}\bra{\bar{\bz}^-}>-\infty$. Again by convexity we have $-\infty <f_{{\bm{c}}}(\bar{\bz}^+)\leq (1-\varepsilon) f_{{\bm{c}}}\bra{\bar{\bz}}+\varepsilon f_{{\bm{c}}}\bra{\bz}$ so $f_{{\bm{c}}}\bra{\bz}>-\infty$.

The claim in the previous paragraph shows that $\dom\bra{\sigma_{A_{\bm{z}}}}=\dom\bra{\sigma_{A_{\bm{z}'}}}$ for all $\bz,\bz'\in \relint\bra{I}$, where  $\dom\bra{\sigma_{A}}=\set{\bm{c}\in \Real^n\,:\, \sigma_{A}(\bm{c})<\infty}$ for any  $A\subseteq \Real^n$. Combining this with   \cite[Proposition C.2.2.1]{hiriart-lemarechal-2001}, we then have that $\dom\bra{\sigma_{\cl\bra{A_{\bm{z}}}}}=\dom\bra{\sigma_{\cl\bra{A_{\bm{z}'}}}}$ for all $\bz,\bz'\in \relint\bra{I}$. The final result follows by applying \cite[Proposition C.2.2.4]{hiriart-lemarechal-2001}, which states that for any closed convex set $C$ we have that $\cl\bra{\dom\bra{\sigma_{C}}}$ and $C_\infty$ are mutually polar cones. \qed
\end{proof}

\autoref{interiorrecessionconesref} establishes that if it is possible to construct a general-integer MICP-R set that is a countably infinite union of convex sets with infinitely many different recession cones, we need to focus on $A_{\bm z}$ where $\bm z$ lies on the boundary of the index set. In the following lemma, we show that we can indeed obtain a countably infinite number of different recession cones. Interestingly, our construction applies even to the restricted class of general \textit{rational} MICP-R sets. 

\begin{lemma}[Infinite Recession Cones]\label{infiniterecesionexample}
Let $M\subseteq \Real^{7}$ be the closed convex set on variables $\bx\in \Real^4$, $y\in \Real$ and $\bz\in \Real^2$ defined by the following inequalities:
\begin{subequations}
\begin{alignat}{3}
\bra{x_4-\bra{1-\frac{1}{1+c}}x_3}^2&\leq \bra{z_2 -c^2-2c\bra{z_1-c}}y &\quad&\forall c\in \mathbb{Z}_+\label{infinite_c_eq}\\
0\leq  x_4&\leq x_3,\label{noninfinite_c_eq:start}\\
x_i&=z_i&\quad&\forall i\in \sidx{2} \label{infinite_c_eq:end} \\
0\leq z_1,\quad z_1^2&\leq z_2.\label{infiniterecesionexample:Ieq}
\end{alignat}
\end{subequations}
 Then $M$ induces a rational MICP formulation of $S=\bigcup_{\bz\in I} A_{\bz}$ for $I=\proj_{\bz}\bra{M}=\set{\bz\in \Real^2_+\,:\, z_1^2\leq z_2}$, 
 \begin{equation}\label{infiniterecesionexample:seq1}
     A_{\bz}=\set{\bx\in \mathbb{R}^4\,:\, x_1=z_1,\quad x_2=z_2,\quad  0\leq  x_4\leq x_3}\quad \forall \bz\in \Int\bra{I}
 \end{equation}
and
\begin{equation}\label{infiniterecesionexample:seq2}
    A_{\bz}=\set{\bx\in \mathbb{R}^4\,:\, \begin{alignedat}{3}x_1&=z_1,\\ x_2&=z_2=z^2_1,\\ 0&\leq x_4=\bra{1-\frac{1}{1+z_1}}x_3\end{alignedat}} \quad \forall \bz\in I\setminus \Int\bra{I}.
\end{equation}

In addition, all $\bz$-projected sets for $\bz\in \Int(I)$ have the same recession cone $\bra{A_{\bz}}_\infty=\set{\bx\in \mathbb{R}^4\,:\, x_1=x_2=0,\quad 0\leq  x_4\leq x_3}$. In contrast, for $\bz\in I\setminus \Int\bra{I}$
we have  \[\bra{A_{\bz}}_\infty=\set{\bx\in \mathbb{R}^4\,:\, x_1=x_2=0,\quad  x_4\geq 0,\quad x_4=\bra{1-\frac{1}{1+z_1}}x_3}.\] Then, $S$ is a countably infinite union of disjoint convex sets with countably infinitely many different recession cones.  
\end{lemma}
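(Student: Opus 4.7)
The plan is to verify in sequence: (i) $I=\proj_{\bz}(M)$ equals the claimed parabolic region, (ii) the $\bz$-projected sets $A_{\bz}$ take the forms \eqref{infiniterecesionexample:seq1} and \eqref{infiniterecesionexample:seq2}, (iii) the recession cones are as claimed and form a countably infinite family of distinct cones among integer $\bz$, and (iv) $I$ is rationally unbounded, so the formulation is a rational MICP-R one.

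For (i), constraint \eqref{infiniterecesionexample:Ieq} gives $I\subseteq \set{\bz\in\Real_+^2:z_1^2\leq z_2}$, while for the reverse inclusion the point $(\bx,y)=(z_1,z_2,0,0,0)$ is feasible in $M$ for any such $\bz$, since both sides of \eqref{infinite_c_eq} vanish. For (ii), I would rewrite the coefficient of $y$ in \eqref{infinite_c_eq} via the algebraic identity
\[ z_2-c^2-2c(z_1-c)=(z_1-c)^2+(z_2-z_1^2), \]
exhibiting it as a nonnegative quantity that vanishes exactly when $z_2=z_1^2$ and $c=z_1$. For $\bz\in\Int(I)$ (so $z_2>z_1^2$), this coefficient is uniformly bounded below by $z_2-z_1^2>0$, while on $\set{0\leq x_4\leq x_3}$ the squared left-hand side is uniformly bounded above by $x_3^2$ (since $1-\tfrac{1}{1+c}\in [0,1)$); hence a single sufficiently large $y$ satisfies every constraint, giving \eqref{infiniterecesionexample:seq1}. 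For $\bz=(z_1,z_1^2)$ with $z_1\in\mathbb{Z}_+$, choosing $c=z_1$ kills the coefficient and forces $x_4=(1-\tfrac{1}{1+z_1})x_3$, while for every other $c\in\mathbb{Z}_+$ the coefficient is strictly positive and the constraint is satisfied by taking $y$ large, yielding \eqref{infiniterecesionexample:seq2}.

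For (iii), the recession cones are read off directly from the affine descriptions by dropping the constraints $x_1=z_1$, $x_2=z_2$. The crucial observation is that the slopes $1-\tfrac{1}{1+c}$ for distinct $c\in\mathbb{Z}_+$ are pairwise distinct, producing a countably infinite family of pairwise distinct recession cones at the integer parabolic boundary points $(c,c^2)$; disjointness of the $A_{\bz}$ across integer $\bz$ is automatic because each $A_{\bz}$ pins $(x_1,x_2)=(z_1,z_2)$. For (iv), one checks $I_\infty=\set{(0,r):r\geq 0}$: for any rational affine image $I'=AI+b$ that is unbounded, either $A(0,1)^T\neq \bm{0}$, in which case $I'_\infty$ contains the rational ray generated by $A(0,1)^T$, or $A(0,1)^T=\bm{0}$, in which case the parabolic arc $\set{(t,t^2):t\geq 0}\subseteq I$ maps to a ray in direction $A(1,0)^T$ (necessarily nonzero, else $A\equiv 0$ and $I'$ would be bounded). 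In either case a suitable positive integer multiple of this rational direction is a nonzero integer recession direction of $I'$, establishing rational unboundedness.

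The main obstacle is the simultaneous feasibility argument for the infinite family of quadratic constraints in step (ii): one must find a \emph{single} $y$ that satisfies every $c\in\mathbb{Z}_+$ at once. The identity above is what resolves this, since it shows the $y$-coefficient grows like $c^2$ away from the critical index $c=z_1$ while the left-hand side stays uniformly bounded, so the supremum of the required lower bounds on $y$ is finite.
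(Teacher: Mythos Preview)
Your proposal is correct and follows essentially the same route as the paper: both hinge on the identity $z_2-c^2-2c(z_1-c)=(z_1-c)^2+(z_2-z_1^2)$ to separate the boundary case (where taking $c=z_1$ forces the ray equation) from the remaining cases (where the $y$-coefficient is bounded away from zero so a single $y$ handles the whole infinite family). Two small differences worth noting: the paper bounds the left-hand side by $\max\{(x_4-x_3)^2,x_4^2\}$ via a monotonicity argument in $c$, whereas your cruder bound $x_3^2$ is simpler and suffices; and you explicitly verify rational unboundedness of $I$, which the paper leaves implicit.
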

\begin{proof}
We begin by showing that $\proj_{\bx,\bz}\bra{M}=Q$ for  
\begin{equation}\label{infiniterecesionexample:firstclaim}
    Q=\set{(\bx,\bz)\in \Real^7\,:\, \begin{alignedat}{3} {z_1^2<z_2}\quad&\vee \quad {x_4=\bra{1-\frac{1}{1+z_1}}x_3},\\
    \text{\eqref{noninfinite_c_eq:start}}&\text{--\eqref{infiniterecesionexample:Ieq}}\end{alignedat}}.
\end{equation}
For this, let $Z=\set{\bz\in \Real^2\,:\, \eqref{infiniterecesionexample:Ieq} }$ and    $l:Z\times \Real_+\to \Real$ given by \[l(\bz,c):=z_2 -c^2-2c\bra{z_1-c}=z_2-z_1^2+(z_1-c)^2.\]
Then, because $z_2-z_1^2\geq 0$ for all $\bz\in Z$, we have that
\begin{equation}\label{infiniterecesionexample:eq1}
    l(c,c^2,c)=0\quad \text{ and }\quad l(\bz,c)\geq 1\quad \forall c\in \mathbb{Z}_+,\bz\in (Z\cap \mathbb{Z}^2)\setminus \set{(c,c^2)}.
\end{equation}
Next, let $X=\set{(x_3,x_4)\in \Real^2\,:\, \eqref{noninfinite_c_eq:start} }$ and $h:X\times \Real_+\to \Real$ given by 
\[h(x_3,x_4,c):=\bra{x_4-\bra{1-\frac{1}{1+c}}x_3}^2.\]
Then for any $(x_3,x_4)\in X$ and  $c\in [0,x_4/(x_3-x_4)]$ we have that $h$ is decreasing in $c$ and hence $h(x_3,x_4,c)\leq h(x_3,x_4,0)=x_4^2$. Similarly, for any $(x_3,x_4)\in X$ and $c\in [x_4/(x_3-x_4),\infty)$  we have that $h$ is increasing in $c$ and hence  $h(x_3,x_4,c)\leq \lim_{c\rightarrow\infty} h(x_3,x_4,c)=(x_4-x_3)^2$. Then,
\begin{equation}\label{infiniterecesionexample:eq2}
    h(x_3,x_4,c) \leq \max\{(x_4-x_3)^2,x_4^2\} \quad \forall  (x_3,x_4)\in X, c\in \mathbb{Z}_+.
\end{equation}
Under this notation we have that $M$ is defined by 
\begin{subequations}\label{infiniterecesionexample:redefM}
   \begin{alignat}{3} h(x_3,x_4,c)&\leq l(\bz,c)y \quad\forall c\in \mathbb{Z}_+,\label{infiniterecesionexample:redefMinf}\\
    (x_3,x_4)&\in X,\quad z\in Z\\ 
    x_i&=z_i\quad\forall i\in \sidx{2},\end{alignat}
\end{subequations}
while $Q$ is defined by 
\begin{subequations}\label{infiniterecesionexample:redefprojM}
   \begin{alignat}{3} {z_1^2<z_2}\quad&\vee \quad {x_4=\bra{1-\frac{1}{1+z_1}}x_3},\label{infiniterecesionexample:redefprojMimplication}\\
    (x_3,x_4)&\in X,\quad z\in Z\\ 
    x_i&=z_i\quad\forall i\in \sidx{2}.\end{alignat}
\end{subequations}
To show $\proj_{\bx,\bz}\bra{M}\subseteq Q$, first note that  \eqref{infiniterecesionexample:eq1} implies  $l(\bz,z_1)=0$ when $z_1^2=z_2$. Hence, if $z_1^2=z_2$, then constraint \eqref{infiniterecesionexample:redefMinf} for $c=z_1$ implies $h(x_3,x_4,z_1)=0$. That is, constraint \eqref{infiniterecesionexample:redefMinf} for $c=z_1$ enforces \eqref{infiniterecesionexample:redefprojMimplication} and hence the containment follows. To show $Q\subseteq \proj_{\bx,\bz}\bra{M}$, note that because of \eqref{infiniterecesionexample:eq2}, for any $(\bx,\bz)\in Q$ we have $(\bx,y,\bz)\in M$ for $y=\max\{(x_4-x_3)^2,x_4^2\}$.

The characterization of the index set $I$ and $\bz$-projected sets $\set{A_{\bz}}_{\bz\in I}$ follows directly from $\proj_{\bx,\bz}\bra{M}=Q$ for $Q$ defined in \eqref{infiniterecesionexample:firstclaim}.
\qed
\end{proof}

\section{Shapes}

One might wonder whether the sets defined in \autoref{infiniterecesionexample} are already defining infinitely many different shapes. Yet, it is a rather immediate observation that the sets $A_{\bz}$ for $\bz\in \Int\bra{I}$ are all translations of each other and sets $A_{\bz}$ for $\bz\in I\setminus \Int\bra{I}$ are all translations and rotations of each other. Hence, $S$ is the union of sets with exactly two shapes under any shape-equivalency notion where shapes are preserved under translations and rotations. This is certainly not the only reasonable shape-equivalency notion, as we could also consider the cases where shapes are only preserved under translations or are further preserved by any invertible affine transformation. The following three shape-equivalency notions will be useful to present and contrast the  results in this section.  

\begin{definition}\label{shapedef}
We consider the following notion of shape equivalency.
\begin{enumerate}
    \item {\bf Translation equivalency:} Two sets have the same shape if they are translations of each other. 
    \item {\bf Affine equivalency:} Two sets have the same shape if they are invertible affine transformations of each other.
    \item {\bf Combinatorial equivalency (for polytopes):} Two polytopes have the same shape if there is a bijection between their \emph{faces} that preserves the inclusion relation \cite{ziegler2012lectures}.
\end{enumerate}
\end{definition}

The shape-equivalency notions in \autoref{shapedef} are listed from strongest (harder for two sets to have equal shape) to weakest (easier for two sets to have equal shape). A relevant class of combinatorially equivalent polytopes which are not all affinely equivalent is the family of all $k$-sided polygons, for any fixed $k\geq 4$ (e.g. \cite[page 6]{ziegler2012lectures}).

In the rest of this section we give an explicit rational MICP-R formulation for a mutually disjoint union of regular polygons with increasing number of sides, which shows that MICP-R sets may be countably infinite unions of polytopes with different shapes under the shape-equivalency notion of combinatorial equivalency. However, we also show that an infinite union of convex sets sharing the same volume can be MICP-R only if they have a finite number of shapes under the shape-equivalency notion of translation equivalency.

\subsection{Rational MICP-R sets can have infinitely many shapes}

To construct the desired formulation, we will construct an infinite union of appropriate polygons which is a rational MICP-R set. We will use the following technical proposition.

\begin{restatable}{proposition}{exampleslemmashapeplus}\label{exampleslemmashaperefplus} 
Let $r:\Real_+\to \Real_+$  and for each $i\in \mathbb{N}\setminus\set{0}$ let $C^i\subseteq \Real^n$ be a closed convex set with $0\in C^i$. If
\begin{enumerate}
    \item $r$ is concave, strictly increasing and $r(z)< 1/2$ for all $z\in \Real_+$, and
    \item\label{exampleslemmashapepluscontainmentconditions} $\set{\bx\in \Real^n\,:\, \norm{\bx}_2\leq \frac{r(i-1)+r(i+1)}{2}}\subseteq C^i \subseteq \set{\bx\in \Real^n\,:\, \norm{\bx}_2\leq r(i)}$ for all $i\in \mathbb{N}\setminus\set{0}$,
\end{enumerate}
then   $S=\bigcup_{i=1}^\infty \bra{C^i+i\bm{e}(1)}$ is a {\bf rational} MICP representable infinite union of mutually disjoint sets.
\end{restatable}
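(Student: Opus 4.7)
The plan is to (i) verify mutual disjointness of the shifted sets directly from the bound $r<1/2$, and (ii) exhibit a single closed convex set $M\subseteq\Real^{n+1}$, with one integer variable $z$, whose slice at every integer $z=i\geq 1$ equals $C^i+i\bm{e}(1)$ and whose $z$-projection $[1,\infty)$ is rationally unbounded.

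I would first dispatch disjointness. Because $C^i+i\bm{e}(1)$ is contained in the closed Euclidean ball of radius $r(i)$ centered at $i\bm{e}(1)$, and the centers $i\bm{e}(1)$ for distinct integers $i$ lie at pairwise distance at least $1$, the hypothesis $r<1/2$ forces the enclosing balls, and therefore the sets $C^i+i\bm{e}(1)$, to be mutually disjoint.

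To build $M$, I would use a support-function construction. Let $h^i(\bm{a}):=\sup_{\bm{c}\in C^i}\bm{a}^T\bm{c}$ denote the support function of $C^i$, and for every $\bm{a}\in\Real^n$ and $z\geq 1$ set
\[H(\bm{a},z):=(\lceil z\rceil-z)\,h^{\lfloor z\rfloor}(\bm{a})+(z-\lfloor z\rfloor)\,h^{\lceil z\rceil}(\bm{a}),\]
the piecewise-linear interpolant in $z$ of the integer values $h^i(\bm{a})$. I would then take
\[M:=\set{(\bx,z)\in\Real^n\times\Real\,:\,z\geq 1,\ \bm{a}^T(\bx-z\bm{e}(1))\leq H(\bm{a},z)\ \forall\bm{a}\in\Real^n}.\]
Each defining inequality is convex in $(\bx,z)$, since the left-hand side is linear in $(\bx,z)$ and (as shown below) $H(\bm{a},\cdot)$ is concave; hence $M$ is closed and convex as an intersection of closed convex sets. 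At an integer $z=i$ one has $H(\bm{a},i)=h^i(\bm{a})$, so the corresponding slice is $\set{\bx:\bm{a}^T(\bx-i\bm{e}(1))\leq h^i(\bm{a})\ \forall\bm{a}}=C^i+i\bm{e}(1)$, because any closed convex set is recovered as the intersection of the halfspaces prescribed by its support function. Since $0\in C^i$ for every $i$, the point $\bx=z\bm{e}(1)$ satisfies every constraint, so $\proj_z(M)=[1,\infty)$.

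The main technical step will be verifying that $H(\bm{a},\cdot)$ is concave in $z$ for each fixed $\bm{a}$, which is precisely what makes the constraints convex. For a piecewise-linear function through integer abscissae, concavity reduces to the midpoint inequality $h^i(\bm{a})\geq\tfrac12\bra{h^{i-1}(\bm{a})+h^{i+1}(\bm{a})}$ for every $i\geq 2$, and here the sandwich hypothesis enters decisively: the inner-ball inclusion yields $h^i(\bm{a})\geq\tfrac{r(i-1)+r(i+1)}{2}\norm{\bm{a}}$, while the outer-ball inclusions yield $h^{i\pm 1}(\bm{a})\leq r(i\pm 1)\norm{\bm{a}}$, and chaining these gives exactly the required bound. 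Finally, the rational unboundedness of $I=[1,\infty)\subseteq\Real$ is immediate: any rational affine image of $I$ is either a singleton or a ray with rational direction, whose recession cone always contains a nonzero integer vector. This certifies that $M$ induces a rational MICP formulation of the mutually disjoint union $S$.
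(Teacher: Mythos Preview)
Your argument is correct and genuinely different from the paper's. The paper constructs, for each $i\geq 1$, a conic homogenization $\hat C^i=\cl\{(\bx,t):\bx/t\in C^i,\,t>0\}$, then rescales the height variable by the secant line $l_i$ through $(i-1,r(i-1))$ and $(i+1,r(i+1))$ to obtain a closed convex set $\tilde C^i$ whose slice at $z=i$ is exactly $C^i$; concavity of $r$ is used to show that $l_i(z)\geq r(z)$ outside $[i-1,i+1]$, so that $\tilde C^i$ is slack enough at every other integer level to contain $C^j$. The formulation is then the intersection $\bigcap_{i\geq 1}\tilde C^i$ (after translating by $z\bm{e}(1)$). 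Your construction instead interpolates the support functions $h^i$ piecewise-linearly in $z$ and intersects over all directions $\bm{a}$; the key step becomes the integer midpoint concavity $h^i(\bm{a})\geq\tfrac12(h^{i-1}(\bm{a})+h^{i+1}(\bm{a}))$, which you obtain cleanly from the sandwich condition alone.

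Two brief remarks on what each approach buys. Your proof is more direct and, interestingly, never invokes the concavity or monotonicity of $r$ explicitly: the sandwich hypothesis (condition~\ref{exampleslemmashapepluscontainmentconditions}) already encodes the needed midpoint inequality, so your argument in fact establishes the result under weaker assumptions. The paper's route, by contrast, yields a formulation indexed by $i$ rather than by directions $\bm{a}$; this is closer in spirit to the ``one constraint set per piece'' style of disjunctive formulations and makes the geometry of each slice (a dilate of $C^i$) more transparent. Both produce a one-integer-variable formulation with $\proj_z(M)=[1,\infty)$, and your verification of rational unboundedness is fine.
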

\begin{proof}
 First, notice that for any $i\geq 1$, $\hat C^i:=\operatorname{cl}(\{ (\bm{x},t) : \bm{x}/t \in C^i, t > 0 \})$ is a closed convex cone with the properties,
 \begin{subequations}\label{exampleslemmashaperefplus:eq0}
 \begin{align}
      (\bm{x},1) \in \hat C^i \quad \Leftrightarrow  \quad  \bm{x}\in C^i\\
      \set{(\bm{x},t)\in \Real^{n+1}\,:\, \norm{\bx}_2\leq \frac{r(i-1)+r(i+1)}{2}t}\subseteq \hat C^i\label{exampleslemmashaperefplus:eq1}
 \end{align}
 \end{subequations}
where \eqref{exampleslemmashaperefplus:eq1}  follows from condition \ref{exampleslemmashapepluscontainmentconditions}.

Next, let $l_i:\Real_+\to \Real_+$ be the function 
\[l_i(z)=\frac{r(i+1)-r(i-1)}{2}z-\frac{(i-1)r(i+1)-(i+1)r(i-1)}{2},\]
that describes the line through $\bra{(i-1),r(i-1)}$ and $\bra{(i+1),r(i+1)}$. This function is strictly increasing because $r$ is strictly increasing. In addition, by concavity of $r$ we have that $l_i$ satisfies 
\begin{equation}\label{exampleslemmashaperefplus:eqconcave}
    r(z)\leq l_i(z) \; \forall z\notin [i-1,i+1].
\end{equation}

Finally, let 
$\tilde C^i:=\set{(\bm{x},z)\in \Real^{n+1}\,:\,\bra{\bm{x},\frac{2l_i(z)}{r(i-1)+r(i+1)}}\in \hat C^i}$. Then, because $l_i$ is strictly increasing and  $l_i(i)=\frac{r(i-1)+r(i+1)}{2}$, \eqref{exampleslemmashaperefplus:eq0} implies 
 \begin{subequations}\label{exampleslemmashaperefplus:eq10}
 \begin{align}
      (\bm{x},i) \in \tilde C^i \quad \Leftrightarrow  \quad  \bm{x}\in C^i\label{exampleslemmashaperefplus:eq1m1}\\
      \set{(\bm{x},z)\in \Real^{n+1}\,:\, \norm{\bx}_2\leq l_i(z)}\subseteq \tilde C^i\label{exampleslemmashaperefplus:eq11}
 \end{align}
 \end{subequations}
We claim that for all $i,j\geq 1$ with $i\neq j$ we have
\begin{equation}\label{exampleslemmashaperefplus:validity}
  C^j\times\set{j} = \set{(\bm{x},z)\in \tilde C^j\,:\, z=j}\subseteq \tilde C^i.
\end{equation}
The first equality follows directly from \eqref{exampleslemmashaperefplus:eq1m1}. For the second containment,  first note that \eqref{exampleslemmashaperefplus:eqconcave} and \eqref{exampleslemmashaperefplus:eq11} imply
  \[\set{(\bm{x},z)\in \Real^{n+1}\,:\, \begin{alignedat}{3}\norm{\bx}_2&\leq r(z),\\ z&\leq i-1\end{alignedat}}\cup \set{(\bm{x},z)\in \Real^{n+1}\,:\, \begin{alignedat}{3}\norm{\bx}_2&\leq r(z),\\ z&\geq  i+1\end{alignedat}} \subseteq  \tilde C^i,\]
The claim then follows by noting that the second containment in condition \ref{exampleslemmashapepluscontainmentconditions} implies that for any $j\notin (i-1,i+1)$ we have 
\[
C^j\times\set{j} \subseteq \set{(\bm{x},z)\in \Real^{n+1}\,:\, \begin{alignedat}{3}\norm{\bx}_2&\leq r(z),\\ z&\leq i-1\end{alignedat}}\cup \set{(\bm{x},z)\in \Real^{n+1}\,:\, \begin{alignedat}{3}\norm{\bx}_2&\leq r(z),\\ z&\geq  i+1\end{alignedat}}.
\]

  Then, \eqref{exampleslemmashaperefplus:eq1m1} and \eqref{exampleslemmashaperefplus:validity} imply that  a rational MICP formulation of $S=\bigcup_{i=1}^\infty \bra{C^i}$ is given by
  \[z\in \mathbb{Z},\quad z\geq 1,\quad (\bm{x},z) \in \tilde C^i\quad \forall i\geq 1.\]
In turn this implies that a rational MICP formulation of $S=\bigcup_{i=1}^\infty \bra{C^i+i\bm{e}(1)}$ is given by
  \[z\in \mathbb{Z},\quad z\geq 1,\quad (\bm{x}-z\bm{e}(1),z) \in \tilde C^i\quad \forall i\geq 1.\]
  
The mutually disjoint property is direct from $r(t)< 1/2 $ and the second containment in condition \ref{exampleslemmashapepluscontainmentconditions}. 
\qed
\end{proof}

\autoref{exampleslemmashaperefplus}  essentially allows us to pick arbitrary shapes for the convex sets $A_{\bm z}$, as, up to translations, they are equal to the predefined $C^i$, as long as they include and are included in balls of appropriate radii. In particular, the following corollary shows how we can employ \autoref{exampleslemmashaperefplus} to achieve this for arguably one of the simplest families of infinitely many shapes; we prove that the convex sets can be chosen to be a sequence of regular polygons with an appropriately increasing number of sides. In particular, any two such polygons are not combinatorially equivalent.
\begin{restatable}{corollary}{exampleslemmashape}\label{exampleslemmashaperef} 
There exists an increasing function $g:\mathbb{N}\setminus\set{0}\to \mathbb{N}\setminus\set{0}$ such that  $S=\bigcup_{i=1}^\infty \bra{P^i+i\bm{e}(1)}$ is a rational MICP representable infinite union of mutually disjoint  sets and $P^i\subseteq \Real^2$ is a regular $g(i)$-sided polygon for all $i\in \mathbb{N}\setminus\set{0}$.
\end{restatable}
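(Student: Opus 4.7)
The plan is to invoke \autoref{exampleslemmashaperefplus} with an explicit choice of $r$ and with each $C^i$ taken to be a regular polygon centered at the origin. Let $r\colon \Real_+\to \Real_+$ be given by $r(z) := \frac{z}{2(z+1)}$. A direct calculation gives $r(0)=0$, $r(z)<1/2$ for all $z\in \Real_+$, $r'(z)=\frac{1}{2(z+1)^2}>0$, and $r''(z)=-\frac{1}{(z+1)^3}<0$, so $r$ is strictly increasing and strictly concave, satisfying condition 1 of \autoref{exampleslemmashaperefplus}.

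Recall that a regular $k$-sided polygon in $\Real^2$ centered at the origin with circumradius $R$ has inradius $R\cos(\pi/k)$. Accordingly, I would take $P^i$ to be the regular $g(i)$-sided polygon centered at the origin with circumradius $r(i)$, where $g(i)\geq 3$ remains to be chosen. Then automatically $P^i \subseteq \set{\bx\in\Real^2 : \norm{\bx}_2 \leq r(i)}$ and $P^i \supseteq \set{\bx\in\Real^2 : \norm{\bx}_2 \leq r(i)\cos(\pi/g(i))}$, so condition \ref{exampleslemmashapepluscontainmentconditions} of \autoref{exampleslemmashaperefplus} holds as soon as
\[r(i)\cos(\pi/g(i)) \;\geq\; \frac{r(i-1)+r(i+1)}{2}.\]
Strict concavity of $r$ yields $\frac{r(i-1)+r(i+1)}{2}<r(i)$, so this inequality can always be met by taking $g(i)$ sufficiently large. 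Define $g$ inductively: let $g(1)$ be the least integer $\geq 3$ for which the $i=1$ inequality holds (for instance $g(1)=4$ works, since $\tfrac{1}{4}\cos(\pi/4)=\tfrac{\sqrt{2}}{8}>\tfrac{1}{6}=\tfrac{r(0)+r(2)}{2}$), and for $i\geq 2$ let $g(i)$ be the least integer exceeding $g(i-1)$ for which the inequality holds. This produces a strictly increasing $g$ such that $P^i$ satisfies the required containments for every $i$.

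Applying \autoref{exampleslemmashaperefplus} with these choices then yields that $S=\bigcup_{i=1}^\infty \bra{P^i+i\bm{e}(1)}$ is a rational MICP-R union of mutually disjoint sets. Because $g$ is strictly increasing, the polygons $P^i$ have pairwise distinct vertex counts and are therefore pairwise not combinatorially equivalent. The only delicate part of the argument is the choice of $r$: it must simultaneously be concave, strictly increasing, and bounded above by $1/2$, so that the ratio $\frac{r(i-1)+r(i+1)}{2r(i)}$ is strictly less than $1$ and leaves room for $\cos(\pi/g(i))$ to exceed it; the explicit $r(z)=z/(2(z+1))$ achieves this with minimal computation, and everything else is a routine verification.
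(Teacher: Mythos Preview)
Your proposal is correct and follows essentially the same approach as the paper's proof. In fact your choice $r(z)=z/(2(z+1))$ is literally the same function the paper uses, written there as $r(i)=\tfrac{1}{2}\bigl(1-\tfrac{1}{i+1}\bigr)$, and the polygon construction is identical; the only cosmetic difference is that the paper gives a closed-form expression for $g(i)$ via $\arccos$, whereas your inductive definition of $g$ has the minor advantage of making the required monotonicity of $g$ explicit rather than implicit.
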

\begin{proof}

For each integer $i\geq 1$ let
\[r(i)=\frac{1}{2}\bra{1-\frac{1}{i+1}},\quad g(i)=\left\lceil\pi\bigg/\bra{\arccos \left(\frac{(i+1) \left(i^2+i-1\right)}{i^2 (i+2)}\right)}\right \rceil\]
and $P^i=\conv\bra{\set{r(i)\cos(\bra{2 k \pi/g(i) }, r(i)\sin\bra{2 k \pi/g(i)}}_{k=1}^{g(i)}}$ be a regular $g(i)$-sided polygon inscribed in the circle of radius $r(i)$ and center the origin.

Now recall the folklore euclidean geometry result that if $\varepsilon = \frac{1}{\cos\bra{\pi/m}}-1$, then a regular $m$-sided polygon inscribed in the circle of radius $1+\varepsilon$ contains the circle of radius $1$ (e.g. \cite[Theorem 2.1]{glineur2000}).
Using the result, if we let $\varepsilon(i)=\frac{2r(i)-r(i-1)-r(i+1)}{r(i-1)+r(i+1)}$ so that $r(i)=(1+\varepsilon(i))\bra{\frac{r(i-1)+r(i+1)}{2}}$, we have that for any
\[m\geq \left\lceil\pi\bigg/\bra{\arccos \left(\frac{1}{1+\varepsilon(i)}\right)}\right \rceil\]
the regular $m$-sided polygon inscribed in the circle of radius $r(i)$ contains the circle of radius $\bra{\frac{r(i-1)+r(i+1)}{2}}$. The result then follows from \autoref{exampleslemmashaperefplus} for $C^i=P^i$ and our appropriate definition of the functions $r(i), g(i).$

\qed
\end{proof}
\subsection{Equal volume implies finitely many shapes}

One interesting aspect of \autoref{exampleslemmashaperef} is that the volume of the $\bm z$-projected sets of the example varies. In this subsection we prove that this property is of \textit{fundamental importance} if the $\bz$-projected sets of a general-integer MICP-R set are to have infinitely many shapes. Our connection between volumes of  $\bz$-projected sets and their shapes is based on the following technical result, which is a consequence of the Brunn-Minkowski inequality \cite{schneider2014convex, KlainBMequality}, arguably one of the centerpieces of modern convex geometry.

\begin{lemma}\label{volumelemma}
Let  $M\subseteq \Real^{n+p+d}$ be a closed convex set inducing an MICP formulation of $S\subseteq \Real^n$, $I$ be its index set and $\set{A_{\bz}}_{\bz\in I}$ be its $\bz$-projected sets. Then $h:I \rightarrow \mathbb{R}$ defined by $h(\bm{z})=(\mathrm{Vol}(A_{\bm{z}}))^{\frac{1}{n}}$ is a concave function. Furthermore, for any $\bm{z},\bm{w} \in I$ and $\lambda \in [0,1]$ it holds:
$$h(\lambda \bm{z}+(1-\lambda) \bm{w}) \geq \mathrm{Vol}(\lambda A_{\bm{z}}+(1-\lambda)A_{\bm{w}})^{\frac{1}{n}} \geq \lambda h(\bm{z})+(1-\lambda)h(\bm{w}),$$
\end{lemma}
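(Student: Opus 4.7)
The plan is to derive the chain of inequalities by combining two ingredients: a convex containment for the $\bz$-projected sets coming directly from convexity of $M$, and the Brunn--Minkowski inequality for the Minkowski sum of the two sets. Concavity of $h$ will then be immediate, since the chain says exactly $h(\lambda\bz+(1-\lambda)\bw)\geq \lambda h(\bz)+(1-\lambda)h(\bw)$ for arbitrary $\bz,\bw\in I$ and $\lambda\in[0,1]$.

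First, I would establish the key containment
\[
\lambda A_{\bz}+(1-\lambda)A_{\bw}\;\subseteq\; A_{\lambda\bz+(1-\lambda)\bw}\qquad \forall\,\bz,\bw\in I,\ \lambda\in[0,1].
\]
Pick any $\bx\in A_{\bz}$ and $\bx'\in A_{\bw}$. By \autoref{def:index} there exist $\by,\by'\in\Real^p$ with $(\bx,\by,\bz)\in M$ and $(\bx',\by',\bw)\in M$. Convexity of $M$ then yields
\[
\lambda(\bx,\by,\bz)+(1-\lambda)(\bx',\by',\bw)\;\in\;M,
\]
and projecting onto the $\bx$-coordinates shows that $\lambda\bx+(1-\lambda)\bx'\in A_{\lambda\bz+(1-\lambda)\bw}$, which gives the desired containment. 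By monotonicity of Lebesgue measure (all sets involved are convex, hence measurable) this yields the left inequality
\[
h\bra{\lambda\bz+(1-\lambda)\bw}=\mathrm{Vol}\bra{A_{\lambda\bz+(1-\lambda)\bw}}^{1/n}\;\geq\;\mathrm{Vol}\bra{\lambda A_{\bz}+(1-\lambda)A_{\bw}}^{1/n}.
\]

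Second, I would invoke the Brunn--Minkowski inequality from \cite{schneider2014convex}: for any nonempty measurable sets $A,B\subseteq\Real^n$ and any $\lambda\in[0,1]$,
\[
\mathrm{Vol}\bra{\lambda A+(1-\lambda)B}^{1/n}\;\geq\;\lambda\,\mathrm{Vol}(A)^{1/n}+(1-\lambda)\,\mathrm{Vol}(B)^{1/n},
\]
applied to $A=A_{\bz}$ and $B=A_{\bw}$. This is exactly the right inequality in the statement. Chaining the two bounds gives the displayed inequality in the lemma and, in particular, $h(\lambda\bz+(1-\lambda)\bw)\geq \lambda h(\bz)+(1-\lambda)h(\bw)$, so $h$ is concave.

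There is no substantial obstacle; the only care needed is checking measurability and handling possibly unbounded or lower-dimensional $A_{\bz}$ (for which the inequality holds with the natural conventions, since convex sets are Lebesgue measurable and Brunn--Minkowski applies in full generality). The heart of the argument is the elementary but crucial observation that convexity of the formulation set $M$ propagates to a Minkowski-sum containment among its fibers $A_{\bz}$, which is what makes Brunn--Minkowski directly applicable.
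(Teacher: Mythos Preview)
Your proposal is correct and follows essentially the same approach as the paper: establish the Minkowski-sum containment $\lambda A_{\bz}+(1-\lambda)A_{\bw}\subseteq A_{\lambda\bz+(1-\lambda)\bw}$ from convexity of $M$, then apply the Brunn--Minkowski inequality. The only cosmetic difference is that the paper first states the containment at the level of the fibers of $M$ before projecting, and invokes Brunn--Minkowski in its additive form together with the homogeneity $\mathrm{Vol}(cA)=c^n\mathrm{Vol}(A)$, whereas you use the convex-combination form directly.
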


Before presenting the proof of Lemma \ref{volumelemma} we note that the additive operation between sets in the statements corresponds to Minkowski addition.
\begin{proof}
First note that convexity of $M$ implies that for any $\bm{z},\bm{w} \in I$ and $\lambda \in [0,1]$ by we have that the set 
\[
\lambda \bra{M \cap (\mathbb{R}^{n+p} \times \{\bm{z}\})}+(1-\lambda)\bra{M \cap (\mathbb{R}^{n+p} \times \{\bm{w}\})}\]
is contained in $\bra{M \cap (\mathbb{R}^{n+p} \times \{\lambda \bm{z}+(1-\lambda)\bm{w}\})}$.
Then by recalling that $A_{\bm{z}}=\proj_{\bx}\bra{M \cap (\mathbb{R}^{n+p} \times \{\bm{z}\})}$ and noting that $\proj_{\bx}$ preserves containment of sets, we further have $\lambda A_{\bm{z}}+(1-\lambda)A_{\bm{w}} \subseteq A_{\lambda {\bm{z}}+ (1-\lambda) \bm{w}}$. Hence we have $$h(\lambda \bm{z}+(1-\lambda)\bm{w})=(\mathrm{Vol}(A_{\lambda \bm{z}+(1-\lambda)\bm{w}}))^{\frac{1}{n}} \geq \mathrm{Vol}(\lambda A_{\bm{z}}+(1-\lambda)A_{\bm{w}})^{\frac{1}{n}}.$$ But now by the Brunn-Minkowski inequality \cite[Theorem 6.1.1]{schneider2014convex}  and the elementary equality $\mathrm{Vol(cA)}=c^n \mathrm{Vol}(A)$ for any Borel set $A$ and $c>0,$ we have $$\mathrm{Vol}(\lambda A_{\bm{z}}+(1-\lambda)A_{\bm{w}})^{\frac{1}{n}} \geq \lambda \mathrm{Vol}( A_{\bm{z}})^{\frac{1}{n}}+(1-\lambda)\mathrm{Vol}(A_{\bm{w}})^{\frac{1}{n}}.$$ The above two inequalities and the definition of $h$ imply $$h(\lambda \bm{z}+(1-\lambda)\bm{w}) \geq \mathrm{Vol}(\lambda A_{\bm{z}}+(1-\lambda)A_{\bm{w}})^{\frac{1}{n}} \geq  \lambda h(\bm{z})+(1-\lambda)h(\bm{w}),$$ as needed. \qed
\end{proof}

We conclude with our final result that, unless the volumes of the $\bm z$-projected sets vary, the sets correspond to finitely many shapes under the shape-equivalency notion of translation invariance.

\begin{restatable}{theorem}{voltheo}\label{voltheolabel}
If $S$ has an MICP formulation such that all its $\bm{z}$-projected sets have the same volume, then there exists a finite family of convex sets $\set{T_i}_{i=1}^m,$ for some $m \in \mathbb{N},$  such that all  $\bm{z}$-projected sets are translations of sets in this family.
\end{restatable}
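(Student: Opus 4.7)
The plan is to combine the concavity of $h(\bz) := \mathrm{Vol}(A_{\bz})^{1/n}$ from Lemma~\ref{volumelemma} with the equality characterization of the Brunn--Minkowski inequality: for compact convex bodies $K, L$ of positive volume, the identity $\mathrm{Vol}(\lambda K + (1-\lambda) L)^{1/n} = \lambda\, \mathrm{Vol}(K)^{1/n} + (1-\lambda)\, \mathrm{Vol}(L)^{1/n}$ forces $K$ and $L$ to be homothetic, and hence translates of one another whenever they have equal volume (see Schneider or the Klain reference cited in the excerpt).

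Let $V$ denote the common volume and first assume $V > 0$. Since any unbounded convex set with nonempty $n$-dimensional interior has infinite Lebesgue measure, every $A_{\bz}$ must be bounded with nonempty interior, so its closure $B_{\bz} := \cl(A_{\bz})$ is a convex body of volume $V$. Constancy $h \equiv V^{1/n}$ forces both inequalities in Lemma~\ref{volumelemma} to be tight for every $\bz, \bz' \in I$ and $\lambda \in (0,1)$. Because Minkowski addition of bounded convex sets commutes with closure, the same tight equality holds after replacing $A_{\bz}, A_{\bz'}$ by $B_{\bz}, B_{\bz'}$. The equality case of Brunn--Minkowski, applied pairwise, then shows that all $B_{\bz}$ are mutual translates; fixing any $\bz_0 \in I$ and setting $T := B_{\bz_0}$, we obtain $B_{\bz} = T + \bm{t}_{\bz}$ for every $\bz \in I$.

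The main obstacle I anticipate is upgrading this conclusion from closures to the (possibly non-closed) sets $A_{\bz}$ themselves, since two convex sets with the same closure can differ on the boundary in uncountably many ways. My plan is to revisit the inclusion $\lambda A_{\bz} + (1-\lambda) A_{\bz'} \subseteq A_{\lambda \bz + (1-\lambda)\bz'}$ shown inside the proof of Lemma~\ref{volumelemma} and use that all three sets have the same $n$-dimensional volume $V$ to force this inclusion to be a set equality; combined with the translation structure of the closures, this should yield $A_{\bz} = A_{\bz_0} + (\bm{t}_{\bz} - \bm{t}_{\bz_0})$ for all $\bz \in I$, giving the conclusion with $m = 1$.

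Finally, the degenerate case $V = 0$ requires separate treatment, since each $A_{\bz}$ then lies in an affine subspace of dimension strictly less than $n$. Here I would argue, via a lower-dimensional Brunn--Minkowski argument applied to the \emph{relative} volumes (which by Lemma~\ref{volumelemma}-type reasoning must again be constant after identifying the right dimension), that the affine hulls of the $A_{\bz}$ are themselves parallel translates of a common affine subspace. Restricting to this subspace reduces to the positive-volume case in a smaller ambient dimension, so induction on $n$ (with trivial base case $n = 0$) closes the argument.
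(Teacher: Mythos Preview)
Your core argument---forcing equality in Brunn--Minkowski via Lemma~\ref{volumelemma} and then invoking the equality case to conclude the sets are homothets, hence translates when the volumes agree---is exactly the paper's approach. The substantive difference is bookkeeping: the paper uses the equal-volume hypothesis only at integer points $\bz\in I\cap\mathbb{Z}^d$, so it needs the midpoint $(\bz+\bw)/2$ to be an integer point as well. It secures this by comparing only pairs $\bz,\bw$ with the same coordinate-parity pattern in $\{0,1\}^d$, which yields at most $2^d$ translation classes rather than your $m=1$. If one reads the hypothesis as constancy of $h$ on all of $I$ (as you do), your stronger conclusion $m=1$ is available and the mod-$2$ device is unnecessary; the paper's version is more conservative in that it needs constancy only on $I\cap\mathbb{Z}^d$.

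Two of your side-goals do not go through as written. First, your plan to upgrade from closures to the possibly non-closed $A_{\bz}$ by ``forcing the inclusion to be a set equality'' fails: for convex sets $C\subseteq D$ of equal finite positive $n$-volume one gets only $\Int(C)=\Int(D)$ and $\cl(C)=\cl(D)$, not $C=D$ (take $C$ the open unit ball and $D$ its closure), and the Minkowski-sum inclusion inside Lemma~\ref{volumelemma} does not pin down boundary behaviour. The paper does not address this either; it simply asserts the translation conclusion for the $A_{\bz}$ themselves. Second, your $V=0$ sketch presumes that all $A_{\bz}$ lie in parallel affine subspaces of a common dimension, which is not obvious a priori (and a ``relative-volume'' version of Lemma~\ref{volumelemma} is not established); the paper sidesteps this by tacitly assuming the common volume is positive (it writes $\alpha>0$).
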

\begin{proof}
Let  $M\subseteq \Real^{n+p+d}$ be a closed convex set inducing an  MICP formulation of $S\subseteq \Real^n$, such that its $\bz$-projected sets  $\set{A_{\bz}}_{\bz\in I}$ all have the same volume. Let $I$ be the index set of this formulation.
For $h$ defined in Lemma~\ref{volumelemma} we have that there exists $\alpha>0$ such that $h(\bm{z})=\alpha$ for all $\bm{z}\in I\cap \mathbb{Z}^d$.
We claim for  any two $\bm{z},\bm{w} \in I\cap \mathbb{Z}^d$ with with the same modulo 2 pattern in their coordinates (i.e. $(\bm{z}+\bm{w})/2\in \mathbb{Z}^d$) $A_{\bm{z}}$ is a translation of $A_{\bm{w}}$.
Indeed, we have $h(\bm{z})=h(\bm{w})=h((\bm{z}+\bm{w})/2)=\alpha$, which implies
$$\frac{1}{2}h(\bm{z})+\frac{1}{2}h(\bm{w})=h\bra{\frac{\bm{z}+\bm{w}}{2}}=\alpha.$$
Together with Lemma~\ref{volumelemma}  this implies $$\mathrm{Vol}\bra{\frac{1}{2} A_{\bm{z}}+\frac{1}{2}A_{\bm{w}}}^{\frac{1}{n}}=\frac{1}{2}\mathrm{Vol}( A_{\bm{z}})^{\frac{1}{n}}+\frac{1}{2}\mathrm{Vol}(A_{\bm{w}})^{\frac{1}{n}}=\alpha.$$ But this implies equality in the Brunn-Minkowski inequality for the convex sets  $A_{\bm{z}}, A_{\bm{w}}$ which implies that they are homothetic (see e.g. \cite{KlainBMequality}). Since the sets are assumed to have the same volume, this implies our translation claim.

Finally, the result follows by defining the family of sets $\set{T_i}_{i=1}^m$ for some  $m \leq 2^d,$ which includes one representative $A_{\bm{z}}$ for each of the finite number of modulo 2 patterns that appear for some $\bm{z}\in I\cap \mathbb{Z}^d$.\qed
\end{proof}

\bibliography{main}

\end{document}